\begin{document}

\title{Fundamental solutions for the super Laplace\\ and Dirac operators and all their natural powers}

\author{H.\ De Bie\footnote{Corresponding author, E-mail: {\tt Hendrik.DeBie@UGent.be}} \and F.\ Sommen\footnote{E-mail: {\tt fs@cage.ugent.be}}
}

\date{\small{Clifford Research Group -- Department of Mathematical Analysis}\\
\small{Faculty of Engineering -- Ghent University\\ Galglaan 2\\ 9000 Gent\\
Belgium}}

\maketitle

\abstract{The fundamental solutions of the super Dirac and Laplace operators and their natural powers are determined within the framework of Clifford analysis.
}

\noindent
\textbf{MSC 2000 :}   30G35, 35A08, 58C50\\
\noindent
\textbf{Keywords :}   Clifford analysis, Dirac operator, Superspace, Fundamental solution

\maketitle

\newtheorem{theorem}{Theorem}
\newtheorem{lemma}{Lemma}
\newtheorem{definition}{Definition}
\newtheorem{corollary}{Corollary}
\newtheorem{proposition}{Proposition}
\newtheorem{remark}{Remark}

\newcommand{\mR}{\mathbb{R}}
\newcommand{\mC}{\mathbb{C}}
\newcommand{\mN}{\mathbb{N}}
\newcommand{\mE}{\mathbb{E}}
\newcommand{\mZ}{\mathbb{Z}}

\newcommand{\cD}{\mathcal{D}}
\newcommand{\cM}{\mathcal{M}}
\newcommand{\cH}{\mathcal{H}}
\newcommand{\cE}{\mathcal{E}}
\newcommand{\cF}{\mathcal{F}}
\newcommand{\cC}{\mathcal{C}}
\newcommand{\cL}{\mathcal{L}}
\newcommand{\cP}{\mathcal{P}}

\newcommand{\ux}{\underline{x}}
\newcommand{\uxb}{\underline{x \grave{}}}
\newcommand{\uy}{\underline{y}}

\newcommand{\pj}{\partial_{x_j}}
\newcommand{\pjb}{\partial_{{x \grave{}}_{j}}}
\newcommand{\pkb}{\partial_{{x \grave{}}_{k}}}

\newcommand{\pI}{\partial_{x_i}}
\newcommand{\pIb}{\partial_{{x \grave{}}_{i}}}
\newcommand{\pIcont}{\partial_{x_i} \rfloor}
\newcommand{\pIbcont}{\partial_{{x \grave{}}_{i}} \rfloor}

\newcommand{\pk}{\partial_{x_k}}
\newcommand{\px}{\partial_x}
\newcommand{\py}{\partial_y}
\newcommand{\upx}{\partial_{\underline{x}}}
\newcommand{\upxb}{\partial_{\underline{{x \grave{}}} }}
\newcommand{\upxcont}{\partial_{\underline{x}}\rfloor}
\newcommand{\upxbcont}{\partial_{\underline{{x \grave{}}}}\rfloor}

\newcommand{\pxcont}{\partial_x \rfloor}
\newcommand{\pjcont}{\partial_{x_j} \rfloor}
\newcommand{\pjbcont}{\partial_{{x \grave{}}_{j}} \rfloor}
\newcommand{\pkcont}{\partial_{x_k} \rfloor}
\newcommand{\pkbcont}{\partial_{{x \grave{}}_{k}} \rfloor}

\newcommand{\fermdirac}{{e \grave{}}_{2j} \partial_{{x \grave{}}_{2j-1}} - {e \grave{}}_{2j-1} \partial_{{x \grave{}}_{2j}}}
\newcommand{\fermvec}{{e \grave{}}_{2i-1} {x \grave{}}_{2i-1} + {e \grave{}}_{2i} {x \grave{}}_{2i}}

\section{Introduction}

In a previous set of papers (see \cite{DBS1,DBS4,DBS2,DBS5}) we have developed the basic framework for Clifford analysis in superspace. Clifford analysis in standard Euclidean space is a function theory of the so-called Dirac operator and as such a generalization of the theory of holomorphic functions of one complex variable. Basic references are \cite{MR697564,MR1169463,MR1130821}. A superspace on the other hand is a generalization of the classical concept of space, where not only commuting variables are considered, but also a set of anti-commuting ones. These superspaces find their use in theoretical physics. We refer the reader to e.g. \cite{MR0208930,MR732126,MR565567,MR0580292,MR1175751}.

We have first established the basic algebraic framework necessary for developing a theory of Clifford analysis in superspace (see \cite{DBS1} and \cite{DBS4}). We have constructed the fundamental differential operators such as the Dirac and Laplace operators, the Euler and Gamma operators, etc. Next we have constructed a theory of spherical monogenics in superspace (see \cite{DBS2}), which was then used to introduce an integration over the supersphere and over superspace (see \cite{DBS5}), inspired by similarities with the classical theory of harmonic analysis. Moreover it turned out that this integration is equivalent with the one introduced in the work of Berezin (see \cite{MR0208930,MR732126}), although our approach offers better insight. Indeed, the definition given by Berezin is not motivated by any connection with classical types of integration, whereas our approach connects the integral over superspace with the well-known theory of integration in Euclidean space (see the discussion in \cite{DBS5}).

All previous work is situated on the level of polynomial functions. A next step is to consider larger algebras and to start the study of the function-theoretical properties of our differential operators. Thus is the aim of the present paper. More precisely we will determine the fundamental solutions for all natural powers of the super Dirac operator.

The paper is organized as follows. We start with a short introduction to Clifford analysis on superspace. Then we recapitulate the results on polyharmonic functions needed later on. In the next section we derive the fundamental solutions for the super Laplace and Dirac operators and compare them with an ad hoc approach, inspired by the theory of radial algebra. In the following section we extend this technique to construct fundamental solutions for all natural powers. Finally we discuss how the present technique is valid for a still larger class of differential operators.

\section{Clifford analysis in superspace}

We first consider the real algebra $\cP = \mbox{Alg}(x_i, e_i; {x \grave{}}_j,{e \grave{}}_j)$, $i=1,\ldots,m$, $j=1,\ldots,2n$
generated by

\begin{itemize}
\item $m$ commuting variables $x_i$ and $m$ orthogonal Clifford generators $e_i$
\item $2n$ anti-commuting variables ${x \grave{}}_i$ and $2n$ symplectic Clifford generators ${e \grave{}}_i$
\end{itemize}
subject to the multiplication relations
\[ \left \{
\begin{array}{l} 
x_i x_j =  x_j x_i\\
{x \grave{}}_i {x \grave{}}_j =  - {x \grave{}}_j {x \grave{}}_i\\
x_i {x \grave{}}_j =  {x \grave{}}_j x_i\\
\end{array} \right .
\quad \mbox{and} \quad
\left \{ \begin{array}{l}
e_j e_k + e_k e_j = -2 \delta_{jk}\\
{e \grave{}}_{2j} {e \grave{}}_{2k} -{e \grave{}}_{2k} {e \grave{}}_{2j}=0\\
{e \grave{}}_{2j-1} {e \grave{}}_{2k-1} -{e \grave{}}_{2k-1} {e \grave{}}_{2j-1}=0\\
{e \grave{}}_{2j-1} {e \grave{}}_{2k} -{e \grave{}}_{2k} {e \grave{}}_{2j-1}=\delta_{jk}\\
e_j {e \grave{}}_{k} +{e \grave{}}_{k} e_j = 0\\
\end{array} \right .
\]
and where moreover all elements $e_i$, ${e \grave{}}_j$ commute with all elements $x_i$, ${x \grave{}}_j$.

\noindent
If we denote by $\Lambda_{2n}$ the Grassmann algebra generated by the anti-commuting variables ${x \grave{}}_j$ and by $\cC$ the algebra generated by all the Clifford numbers $e_i$, ${e \grave{}}_j$, then we clearly have that

\[
\cP = \mR[x_1,\ldots,x_m]\otimes \Lambda_{2n} \otimes \cC.
\]
The most important element of the algebra $\cP$ is the vector variable $x = \ux+\uxb$ with

\[
\begin{array}{lll}
\ux &=& \sum_{i=1}^m x_i e_i\\
&& \vspace{-2mm}\\
\uxb &=& \sum_{j=1}^{2n} {x \grave{}}_{j} {e \grave{}}_{j}.
\end{array}
\]
One calculates that

\[
x^2 = \uxb^2 +\ux^2 = \sum_{j=1}^n {x\grave{}}_{2j-1} {x\grave{}}_{2j}  -  \sum_{j=1}^m x_j^2.
\]
The super Dirac operator is defined as

\[
\px = \upxb-\upx = 2 \sum_{j=1}^{n} \left( {e \grave{}}_{2j} \partial_{{x\grave{}}_{2j-1}} - {e \grave{}}_{2j-1} \partial_{{x\grave{}}_{2j}}  \right)-\sum_{j=1}^m e_j \pj.
\]
Its square is the super Laplace operator:

\[
\Delta = \px^2 =4 \sum_{j=1}^n \partial_{{x \grave{}}_{2j-1}} \partial_{{x \grave{}}_{2j}} -\sum_{j=1}^{m} \pj^2.
\]
The bosonic part of this operator is $\Delta_b = -\sum_{j=1}^{m} \pj^2$, which is the classical Laplace operator. The fermionic part is $\Delta_f = 4 \sum_{j=1}^n \partial_{{x \grave{}}_{2j-1}} \partial_{{x \grave{}}_{2j}}$.
For the other important operators in super Clifford analysis and their commutation relations we refer the reader to \cite{DBS1}. If we let $\px$ act on $x$ we find that

\[
\px x = m-2n = M
\]
where $M$ is the so-called super-dimension. This numerical parameter gives a global characterization of our superspace and will be used in remark \ref{radialalgebraapproach}.
We will also need the following basic formulae (see \cite{DBS1})

\begin{eqnarray}
\label{evenexpr}
\px(x^{2s} ) &=& 2 s x^{2s-1}\\
\label{oddexpr}
\px(x^{2s+1} ) &=& (M + 2s) x^{2s}.
\end{eqnarray}
In the case where $m=1$, $n=0$ this reduces to the familiar formula $\frac{d}{dx} x^k = k x^{k-1}$.

\noindent
Now we can consider several generalizations of the algebra $\cP$. This leads to the introduction of the  function-spaces:

\[
\cF(\Omega)_{m|2n} = \cF(\Omega) \otimes \Lambda_{2n} \otimes \cC
\]
where $\cF(\Omega)$ stands for $\cD(\Omega)$, $\cC^{k}(\Omega)$, $L_{p}(\Omega)$, $L_{1}^{\mbox{\footnotesize loc}}(\Omega)$, $\ldots$ with $\Omega$ an open domain in $\mR^m$. Finally the space of harmonic functions, i.e. null-solutions of the super Laplace operator, will be denoted by $\cH(\Omega)_{m|2n} \subseteq \cC^{2}(\Omega)\otimes \Lambda_{2n}$. Similarly we denote by $\cM(\Omega)_{m|2n} \subseteq \cC^{2}(\Omega)_{m|2n}$ the space of monogenic functions, i.e. null-solutions of the super Dirac operator. We have that $\cM(\Omega)_{m|2n} \subseteq  \cH(\Omega)_{m|2n} \otimes \cC$.

Now we have the following theorem, which generalizes a classical result in harmonic analysis.

\begin{theorem}
Null-solutions of the super Laplace and the super Dirac operator are $\cC^{\infty}$-functions, i.e.
\begin{eqnarray*}
\cH(\Omega)_{m|2n}\otimes \cC &\subseteq& \cC^{\infty}(\Omega)_{m|2n}\\
\cM(\Omega)_{m|2n} &\subseteq& \cC^{\infty}(\Omega)_{m|2n}.
\end{eqnarray*}
\end{theorem}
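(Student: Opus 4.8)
The plan is to reduce the super-harmonicity condition to a finite cascade of classical (scalar) inhomogeneous Laplace equations on $\Omega\subseteq\mR^m$ and then to invoke the regularity theory of the ordinary Laplacian $\Delta_b$. First I would expand any $f\in\cH(\Omega)_{m|2n}\otimes\cC$ according to its degree in the anti-commuting variables, writing $f=\sum_{k=0}^{2n}f_{[k]}$, where $f_{[k]}$ collects all terms built from exactly $k$ of the generators ${x \grave{}}_j$. Its coefficients are genuine functions in $\cC^2(\Omega)\otimes\cC$ of the bosonic variables $x_1,\ldots,x_m$. The decisive structural fact is that the splitting $\Delta=\Delta_b+\Delta_f$ respects this grading: $\Delta_b=-\sum_{j=1}^m\pj^2$ preserves the Grassmann degree, since it differentiates only the $x_i$ and leaves the monomials in the ${x \grave{}}_j$ untouched, whereas $\Delta_f=4\sum_{j=1}^n\partial_{{x\grave{}}_{2j-1}}\partial_{{x\grave{}}_{2j}}$ lowers the degree by two and acts \emph{purely algebraically} on the coefficient functions, i.e.\ without any differentiation in $x$.

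Collecting the homogeneous part of $\Delta f=0$ of Grassmann degree $k$ therefore yields, for every $k$,
\[
\Delta_b f_{[k]} = -\,\Delta_f f_{[k+2]},
\]
with the convention $f_{[k]}=0$ for $k>2n$. This is a triangular system, and I would solve it by downward induction on $k$, separately within the even and the odd parity classes. For the two top degrees $k=2n$ and $k=2n-1$ the right-hand side vanishes, so every coefficient of $f_{[2n]}$ and of $f_{[2n-1]}$ is a classical $\cC^2$ solution of $\Delta_b u=0$; by the classical fact that a harmonic function is real-analytic, these coefficients are $\cC^\infty$.

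For the inductive step, suppose $f_{[k+2]}$ has $\cC^\infty$ coefficients. Because $\Delta_f$ does not differentiate in $x$, the inhomogeneity $\Delta_f f_{[k+2]}$ then also has $\cC^\infty$ coefficients, so $f_{[k]}$ satisfies $\Delta_b f_{[k]}=g$ with $g$ smooth in $x$. Interior elliptic regularity for the constant-coefficient elliptic operator $\Delta_b$ upgrades the $\cC^2$ solution $f_{[k]}$ to $\cC^\infty$. Descending from $2n$ and $2n-1$ down to $0$ and $1$ shows that every $f_{[k]}$, and hence $f$ itself, lies in $\cC^\infty(\Omega)_{m|2n}$. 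The Dirac statement is then immediate: if $\px f=0$ then $\Delta f=\px(\px f)=0$, so $f\in\cH(\Omega)_{m|2n}\otimes\cC$ and the first part applies; this is precisely the recorded inclusion $\cM(\Omega)_{m|2n}\subseteq\cH(\Omega)_{m|2n}\otimes\cC$.

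I expect the only genuine obstacle to be the bookkeeping that isolates this triangular structure: one must verify carefully that $\Delta_f$ strictly lowers the Grassmann degree and is algebraic in the $x_i$, so that the single super-Laplace equation decouples into finitely many scalar Laplace equations whose inhomogeneities are \emph{already} known to be smooth by the preceding induction step. Once this is established, nothing beyond the classical smoothness of harmonic functions and interior elliptic regularity for $\Delta_b$ is required, and the finite length $2n$ of the Grassmann expansion guarantees that the induction terminates.
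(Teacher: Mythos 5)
Your proposal is correct and follows essentially the same route as the paper: expand $f$ by Grassmann degree, observe that $\Delta f=0$ collapses into the triangular system $\Delta_b f_{[k]}=-\Delta_f f_{[k+2]}$ (the paper writes this coefficient-wise as $\Delta_b f_{(\alpha)}=\sum_{(\beta)}c_{(\beta)}f_{(\beta)}$ with $|(\beta)|=|(\alpha)|+2$), and reduce the Dirac case to the Laplace case via $\Delta=\px^2$. The only difference is cosmetic and lies in the final regularity step: the paper iterates the triangular relations upward to conclude $\Delta_b^{k}f_{(\alpha)}=0$ for some $k\leq n+1$, so each coefficient is polyharmonic and hence smooth, whereas you run a downward induction invoking interior elliptic regularity for $\Delta_b u=g$ with smooth $g$ --- two equivalent invocations of the same classical hypoellipticity of the Laplacian.
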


\begin{proof}
It suffices to give the proof for harmonic functions, as monogenic functions are also harmonic. So we consider a function $f \in \cH(\Omega)_{m|2n}$. Such a function can be written as

\[
f = \sum_{(\alpha)} f_{(\alpha)} {x \grave{}}_1^{\alpha_1} \ldots {x \grave{}}_{2n}^{\alpha_{2n}}
\]
with $(\alpha)=(\alpha_{1},\ldots,\alpha_{2n})$, $\alpha_{i} \in \{0,1\}$ and $f_{(\alpha)} \in \cC^2(\Omega)$.

\noindent
Expressing that $\Delta f = \Delta_b f + \Delta_f f=0$ leads to a set of equations of the following type

\[
\Delta_b f_{(\alpha)} = \sum_{(\beta)} c_{(\beta)} f_{(\beta)}, \quad |(\beta)| = |(\alpha)|+2
\]
with $c_{(\beta)} \in \mR$. We conclude that for every $(\alpha)$ there exists a $k \in \mN$ ($k\leq n+1$) such that $\Delta_b^k f_{(\alpha)}=0$. Hence $f_{(\alpha)}$ is polyharmonic and thus an element of $\cC^{\infty}(\Omega)$.
\end{proof}

We end this section with a few words on integration in superspace. The proper integral to consider is the so-called Berezin integral $\int_B$ (see \cite{MR0208930,MR732126} and \cite{DBS5}) which has the following formal definition

\[
\int_B = \int_{\mR^m} dV(\ux) \; \partial_{{x \grave{}}_{2n}} \ldots \partial_{{x \grave{}}_{1}},
\]
with $d V( \ux)$ the Lebesgue measure in $\mR^m$.

\noindent
One can also define a super Dirac distribution as

\[
\delta(x) = \delta(\ux) {x \grave{}}_{1} \ldots {x \grave{}}_{2n} =\delta(\ux) \frac{\uxb^{2n}}{n!}
\]
with $\delta(\ux)$ the classical Dirac distribution in $\mR^m$.
We clearly have that

\begin{eqnarray*}
<\delta(x-y), f(x)> &=& \int_B \delta(\ux -\uy ) ({x \grave{}}_{1}-{y \grave{}}_{1}) \ldots ({x \grave{}}_{2n}-{y \grave{}}_{2n}) f(x)\\
&=& f(y)
\end{eqnarray*}
with $f \in \cD(\Omega)_{m|2n}$.

\section{Fundamental solutions in $\mR^m$}
\label{fundsolRm}

The fundamental solutions for the natural powers of the classical Laplace operator $\Delta_b$ are very well known, see e.g. \cite{MR745128}. 

\noindent
We denote by $\nu_{2l}^{m|0}$, $l=1, 2, \ldots$ a sequence of such fundamental solutions, satisfying
\begin{eqnarray*}
\Delta^j_b \nu_{2l}^{m|0} &=& \nu_{2l-2j}^{m|0}, \quad j<l\\
\Delta^l_b \nu_{2l}^{m|0} &=& \delta(\ux).
\end{eqnarray*}
Their explicit form depends both on the dimension $m$ and on $l$.
More specifically, in the case where $m$ is odd we have that
\begin{equation}
\nu_{2l}^{m|0} = \frac{r^{2l-m}}{\gamma_{l-1}},\qquad \gamma_l =  (-1)^{l+1} (2-m) 4^l l ! \frac{\Gamma(l+2-m/2)}{\Gamma(2-m/2)} \frac{2 \pi^{m/2}}{\Gamma(m/2)}
\label{fundsolRmexpl}
\end{equation}
with $r = \sqrt{-\ux^2}$.
The formulae for $m$ even are more complicated and can be found in \cite{MR745128}.

\noindent
Concerning the refinement to Clifford analysis, we clearly have that $\nu_{2l+1}^{m|0} = \upx \nu_{2l+2}^{m|0}$ is a fundamental solution of $\Delta^l_b \upx$.

\section{Fundamental solution of $\Delta$ and $\px$}
\label{fundsollapl}

From now on we restrict ourselves to the case where $m \neq 0$. The purely fermionic case will be discussed briefly in section \ref{fermcase}.

\noindent
Our aim is to construct a function $\rho$ such that in distributional sense

\[
\Delta \rho = \delta(x).
\]

\noindent
We propose the following form for the fundamental solution:

\[
\rho =  \sum_{k=0}^n a_k (\Delta_b^{n-k}\phi) \uxb^{2n-2k},
\]
with $\phi$ and $a_k \in \mR$ to be determined.

\noindent
Now let us calculate $\Delta \rho$

\begin{eqnarray*}
\Delta \rho &=& (\Delta_b+\Delta_f) \rho\\
&=&  \sum_{k=0}^n a_k  (\Delta_b^{n-k +1}\phi) \uxb^{2n-2k}\\
&& +  \sum_{k=1}^n a_k (2n-2k)(2n-2k-2-2n) (\Delta_b^{n-k}\phi) \uxb^{2n-2k-2}\\
&=& a_0 (\Delta_b^{n+1}\phi) \uxb^{2n} + \sum_{k=1}^n \left[ a_k - 2k(2n-2k+2)a_{k-1}  \right] (\Delta_b^{n-k +1}\phi) \uxb^{2n-2k}.\\
\end{eqnarray*}

\noindent
So $\rho$ is a fundamental solution if and only if

\[
a_0 (\Delta_b^{n+1}\phi) = \delta(\ux) \frac{1}{n!}
\]
and $a_k$ satisfies the recurrence relation

\[
a_k =4k(n-k+1) a_{k-1}.
\]

\noindent
The first equation leads to

\[
\phi = \nu_{2n+2}^{m|0},\quad a_0 = \frac{1}{n!}.
\]

\noindent
We then immediately find the following expression for the $a_k$

\[
a_k = \frac{ 4^k k!}{(n-k)!},\qquad k = 0, \ldots ,n.
\]

\noindent
Summarizing we obtain the following theorem

\begin{theorem}
The function $\nu_2^{m|2n}$ defined by

\[
\nu_2^{m|2n} = \sum_{k=0}^n \frac{4^k k!}{(n-k)!} \nu_{2k+2}^{m|0} \uxb^{2n-2k},
\]
with $\nu_{2k+2}^{m|0}$ as in section \ref{fundsolRm},
is a fundamental solution for the operator $\Delta$.
\label{fundsollaplace}
\end{theorem}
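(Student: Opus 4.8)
The plan is to prove the statement by direct verification: I would apply $\Delta = \Delta_b + \Delta_f$ to the proposed $\nu_2^{m|2n}$ and check, in the distributional sense, that the result is exactly the super Dirac distribution $\delta(x) = \delta(\ux)\uxb^{2n}/n!$. Since $\Delta_b$ differentiates only the commuting variables and $\Delta_f$ only the anticommuting ones, while each summand $\nu_{2k+2}^{m|0}\uxb^{2n-2k}$ splits into a purely bosonic factor times a purely fermionic factor, I can compute $\Delta_b\nu_2^{m|2n}$ and $\Delta_f\nu_2^{m|2n}$ independently and add them. Writing $a_k = 4^k k!/(n-k)!$ for the coefficients, the whole argument reduces to tracking how these two operators shift the bosonic index and the fermionic degree.

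First I would record the two elementary actions that drive everything. On the bosonic side, the defining relations of the $\nu_{2l}^{m|0}$ recalled in Section \ref{fundsolRm} give $\Delta_b\nu_{2k+2}^{m|0} = \nu_{2k}^{m|0}$ for $k\geq 1$ and $\Delta_b\nu_2^{m|0} = \delta(\ux)$, so one application of $\Delta_b$ merely lowers each bosonic index by one, except that the $k=0$ summand produces $\delta(\ux)$. On the fermionic side, the key input is the action of $\Delta_f$ on even powers of $\uxb$: since $\uxb^2$ is central and purely even (it is the fermionic part of $x^2$), one has $\uxb^{2s}=(\uxb^2)^s$, and a short computation — equivalently the purely fermionic specialization of (\ref{evenexpr})--(\ref{oddexpr}) with the super-dimension $M$ replaced by $-2n$ — gives $\Delta_f\uxb^{2s} = 4s(s-n-1)\uxb^{2s-2}$, which with $s = n-k$ is the factor $(2n-2k)(2n-2k-2-2n)$. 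I would stress that this verification uses only the defining relations of the $\nu_{2l}^{m|0}$, not their explicit form, so it holds uniformly for even and odd $m$.

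With these facts in hand, applying $\Delta_b$ to $\nu_2^{m|2n}$ gives $a_0\delta(\ux)\uxb^{2n} + \sum_{k=1}^n a_k\nu_{2k}^{m|0}\uxb^{2n-2k}$, whereas applying $\Delta_f$ gives $\sum_{k=0}^{n-1}(-4(n-k)(k+1))\,a_k\nu_{2k+2}^{m|0}\uxb^{2n-2k-2}$. Reindexing the second sum by $k\mapsto k-1$ brings both into the common fermionic degree $\uxb^{2n-2k}$ with common bosonic factor $\nu_{2k}^{m|0}$, so that for each $1\leq k\leq n$ the two contributions combine with coefficient $a_k - 4k(n-k+1)a_{k-1}$. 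The decisive step is then the purely algebraic observation that $a_k = 4^k k!/(n-k)!$ satisfies exactly the recurrence $a_k = 4k(n-k+1)a_{k-1}$, so all terms with $1\leq k\leq n$ cancel; only the $k=0$ term survives, and as $a_0 = 1/n!$ it equals $\delta(\ux)\uxb^{2n}/n! = \delta(x)$, as required.

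I expect the only genuine obstacle to be the fermionic Laplacian formula $\Delta_f\uxb^{2s} = 4s(s-n-1)\uxb^{2s-2}$, since this is where the anticommutativity of the Grassmann generators and the symplectic structure of $\Delta_f$ enter and where an off-by-one or a sign slip would break the telescoping. I would therefore pin it down carefully, either by expanding $(\uxb^2)^s$ and differentiating directly, or by iterating the fermionic analogues of (\ref{evenexpr})--(\ref{oddexpr}); once that coefficient is secured, the bosonic identities and the coefficient recurrence are entirely routine, and the only distributional subtlety is confined to the single relation $\Delta_b\nu_2^{m|0}=\delta(\ux)$.
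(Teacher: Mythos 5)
Your proposal is correct and follows essentially the same route as the paper: the paper derives the coefficients $a_k$ by inserting the ansatz $\rho=\sum_k a_k(\Delta_b^{n-k}\phi)\uxb^{2n-2k}$ into $\Delta\rho=(\Delta_b+\Delta_f)\rho$, using exactly your fermionic identity $\Delta_f\uxb^{2s}=2s(2s-2-2n)\uxb^{2s-2}$ and the same recurrence $a_k=4k(n-k+1)a_{k-1}$ with $a_0=1/n!$, $\phi=\nu_{2n+2}^{m|0}$. Your version simply runs this computation as a verification of the stated coefficients rather than a derivation of them, and all your intermediate formulas (including the sign and the telescoping) agree with the paper's.
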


\begin{proof}
It is clear that $\nu_2^{m|2n} \in L_{1}^{\mbox{\footnotesize loc}}(\mR^m)_{m|2n}$. Moreover, we have that $\nu_2^{m|2n} \in \cH(\mR^m -  \{0\})_{m|2n}$ and that $\Delta \nu_2^{m|2n} = \delta(x)$ in distributional sense.
\end{proof}

\noindent
Now suppose that $m$ is odd, then the previous formula simplifies to
\begin{equation}
\nu_2^{m|2n} = \frac{\Gamma(m/2)}{2(2-m) \pi^{m/2}} \sum_{k=0}^n  \frac{ (-1)^{k+1}}{(n-k)!} \frac{\Gamma(2-m/2)}{\Gamma(k+2-m/2)} r^{2k+2-m} \uxb^{2n-2k},
\label{specialcaselaplace}
\end{equation}
where we have used formula (\ref{fundsolRmexpl}).

\noindent
As we have that $\Delta \nu_2^{m|2n} = \delta(x)$, a fundamental solution for the Dirac operator $\px$ is given by $\px \nu_2^{m|2n}$. This leads to the following 

\begin{theorem}
The function $\nu_1^{m|2n}$ defined by

\begin{eqnarray*}
\nu_1^{m|2n} &=& \sum_{k=0}^{n-1} 2 \frac{4^k k!}{(n-k-1)!} \nu_{2k+2}^{m|0} \uxb^{2n-2k-1} - \sum_{k=0}^n \frac{4^k k! }{(n-k)!} \nu_{2k+1}^{m|0} \uxb^{2n-2k}
\end{eqnarray*}
is a fundamental solution for the operator $\px$.
\end{theorem}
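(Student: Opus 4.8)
The plan is to exploit the factorization $\Delta=\px^2$ rather than to reverify a distributional identity from scratch. By Theorem~\ref{fundsollaplace} we have $\Delta\nu_2^{m|2n}=\delta(x)$ in distributional sense, so the function $\px\nu_2^{m|2n}$ satisfies $\px(\px\nu_2^{m|2n})=\px^2\nu_2^{m|2n}=\Delta\nu_2^{m|2n}=\delta(x)$ and is therefore automatically a fundamental solution of $\px$. Thus the real content of the theorem is the \emph{identification} of $\px\nu_2^{m|2n}$ with the explicit expression for $\nu_1^{m|2n}$; once this is done, local integrability is read off directly from the kernels $\nu_{2k+1}^{m|0}$ and $\nu_{2k+2}^{m|0}$ occurring in the formula. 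So I would devote the proof to computing $\px\nu_2^{m|2n}$ term by term.

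To carry this out I would split $\px=\upxb-\upx$ and let each piece act on
\[
\nu_2^{m|2n} = \sum_{k=0}^n \frac{4^k k!}{(n-k)!}\, \nu_{2k+2}^{m|0}\, \uxb^{2n-2k}.
\]
For the bosonic part, $\upx$ contains only the $e_j$ and $\pj$, so it acts on the radial factors $\nu_{2k+2}^{m|0}$, but its passage through $\uxb^{2n-2k}$ must be tracked. From the relation $e_j {e \grave{}}_k = -{e \grave{}}_k e_j$ one gets $\upx\uxb=-\uxb\upx$, hence $\upx$ \emph{commutes} with the even powers $\uxb^{2n-2k}$. Combining this with the distributional identity $\upx\nu_{2k+2}^{m|0}=\nu_{2k+1}^{m|0}$ recorded in Section~\ref{fundsolRm}, the contribution $-\upx\nu_2^{m|2n}$ reproduces exactly the second sum in $\nu_1^{m|2n}$.

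For the fermionic part, $\upxb$ acts only on $\uxb^{2n-2k}$ and commutes with the scalar radial factors. Here I would invoke the purely fermionic specialization of formula (\ref{evenexpr}) — equivalently the case $m=0$, super-dimension $-2n$ — namely $\upxb\,\uxb^{2s}=2s\,\uxb^{2s-1}$. Applying it, the $k=n$ term drops since its factor $2n-2n$ vanishes, and the coefficient simplifies through $\tfrac{2(n-k)}{(n-k)!}=\tfrac{2}{(n-k-1)!}$, which yields precisely the first sum of $\nu_1^{m|2n}$. Assembling $\px\nu_2^{m|2n}=\upxb\nu_2^{m|2n}-\upx\nu_2^{m|2n}$ then produces the claimed expression.

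The main obstacle is bookkeeping rather than conceptual: correctly handling the graded (anti)commutation between the bosonic Clifford generators $e_j$ and the fermionic vector $\uxb$, and justifying the fermionic derivative rule $\upxb\,\uxb^{2s}=2s\,\uxb^{2s-1}$. The latter can in fact be read off from the $\Delta_f$ computation already performed en route to Theorem~\ref{fundsollaplace}, since $\Delta_f=\upxb^2$; everything remaining is a single index shift.
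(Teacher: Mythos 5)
Your proposal is correct and follows exactly the paper's route: the paper obtains this theorem by the one-line observation that since $\Delta \nu_2^{m|2n} = \delta(x)$ and $\Delta = \px^2$, the function $\px \nu_2^{m|2n}$ is a fundamental solution of $\px$, and the stated formula is just the result of that computation. Your term-by-term evaluation (using $\upx \nu_{2k+2}^{m|0} = \nu_{2k+1}^{m|0}$, the anticommutation of $\upx$ with $\uxb$, and $\upxb\,\uxb^{2s} = 2s\,\uxb^{2s-1}$ from formula (\ref{evenexpr}) with $m=0$) correctly fills in the bookkeeping that the paper leaves implicit.
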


\begin{remark}
We could propose the following form

\[
g = \frac{1}{(x^2)^{\frac{M-2}{2}}}
\]
for the fundamental solution of $\Delta$, where we have replaced $m$ by the super-dimension $M$ in the classical expression. This technique is inspired by radial algebra (see \cite{MR1472163}), which gives a very general framework for constructing theories of Clifford analysis, based on the introduction of an abstract dimension parameter (in this case the super-dimension). This leads partially to the correct result (see also \cite{MR2032707}). Indeed, formally we can expand this as

\begin{eqnarray*}
g &=& \frac{1}{(x^2)^{\frac{M-2}{2}}}\\
&=& \frac{1}{(\ux^2 +\uxb^2)^{\frac{M-2}{2}}}\\
&=&\frac{1}{(\ux^2)^{\frac{M-2}{2}}} \left( 1 + \frac{\uxb^2}{\ux^2} \right)^{1-\frac{M}{2}}\\
&=& \sum_{k=0}^n \binom{1-\frac{M}{2}}{k}  \frac{\uxb^{2k}}{(\ux^2)^{\frac{M}{2} -1 +k}}.
\end{eqnarray*}
The coefficients in this development are proportional to the ones obtained in theorem \ref{fundsollaplace} (see also formula (\ref{specialcaselaplace})), so this  yields the correct result. This expansion is however only valid if $m$ is odd.
\label{radialalgebraapproach}
\end{remark}

The fundamental solution can of course be used to determine solutions of the inhomogeneous equation $\Delta f = \rho$. We have for example the following

\begin{proposition}
Let $\rho \in \cD(\Omega)_{m|2n}$, then a solution of $\Delta f = \rho$ is given by

\[
f(x)= \nu^{m|2n}_2 * \rho = \int_B \nu^{m|2n}_2(x-y) \rho(y).
\]
\end{proposition}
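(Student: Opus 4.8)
The plan is to follow the classical distribution-theoretic argument that, for a fundamental solution $E$ of $\Delta$ and a test function $\rho$, one has $\Delta(E*\rho) = (\Delta E)*\rho = \delta*\rho = \rho$, carried over to the super setting where the convolution is defined through the Berezin integral. Two ingredients are already available: Theorem~\ref{fundsollaplace}, which provides $\Delta\nu_2^{m|2n} = \delta(x)$ in the distributional sense together with $\nu_2^{m|2n}\in L_1^{\mbox{\footnotesize loc}}(\mR^m)_{m|2n}$; and the sifting identity for the super Dirac distribution established at the end of Section~2, which (after interchanging the roles of $x$ and $y$, using that $\delta(x-y)=\delta(y-x)$ here) reads $\int_B \delta(x-y)\,g(y) = g(x)$ for $g\in\cD(\Omega)_{m|2n}$.

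First I would check that $f=\nu_2^{m|2n}*\rho$ is well defined. Since $\nu_2^{m|2n}$ is a finite sum of terms $\nu_{2k+2}^{m|0}\,\uxb^{2n-2k}$, the Berezin integral $\int_B \nu_2^{m|2n}(x-y)\rho(y)$ separates into ordinary Lebesgue integrals over $\uy\in\mR^m$ against the locally integrable kernels $\nu_{2k+2}^{m|0}$, multiplied by the purely algebraic (differential) action in the anti-commuting $y$-variables. As $\rho$ has compact support and each $\nu_{2k+2}^{m|0}$ is locally integrable, every resulting integral converges, so $f$ is well defined.

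Next I would apply $\Delta=\Delta_x$ to $f$ and pass it under the integral sign. The operator $\Delta_x$ differentiates only in the $x$-variables and has constant Clifford coefficients, so acting from the left it commutes past the factor $\rho(y)$, which depends on $y$ alone. For the fermionic part the interchange is immediate, the Berezin derivatives in the anti-commuting $y$-variables being independent algebraic operations. This formally gives $\Delta_x f(x) = \int_B \bigl(\Delta_x \nu_2^{m|2n}(x-y)\bigr)\rho(y)$.

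The step requiring the most care, which I expect to be the main obstacle, is justifying the passage of the bosonic part of $\Delta$ under the Lebesgue integral: the kernels $\nu_{2k+2}^{m|0}$ are singular at the origin, so differentiation under the integral sign is not licit pointwise. This is exactly what the distributional identity $\Delta\nu_2^{m|2n}=\delta(x)$ of Theorem~\ref{fundsollaplace} encodes; by translation invariance it reads $\Delta_x\nu_2^{m|2n}(x-y)=\delta(x-y)$ as distributions, the singular contribution being absorbed into the Dirac mass, and the smoothness and compact support of $\rho$ guarantee that the convolution commutes with $\Delta$ in the usual way. With this in hand the sifting property finishes the argument: $\Delta_x f(x)=\int_B\delta(x-y)\,\rho(y)=\rho(x)$, which is the claimed equation $\Delta f=\rho$.
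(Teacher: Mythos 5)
The paper states this proposition without proof---it appears as an immediate application of Theorem~\ref{fundsollaplace}---so there is no argument of the authors to compare yours against; what can be assessed is correctness, and your proof is correct in substance. It is the standard distribution-theoretic argument, properly adapted to the super setting: well-definedness follows from local integrability of the kernels $\nu_{2k+2}^{m|0}$ against the compactly supported $\rho$; the Berezin-integral part is a finite algebraic operation causing no convergence or sign problems (the fermionic operators involved are even, so they pass each other without signs); and the identity $\delta(x-y)=\delta(y-x)$ that you use in the sifting step does hold, since the $2n$ sign flips in the Grassmann factors compensate.

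The one soft spot is the step you yourself flag as the main obstacle. Your sentence ``this is exactly what the distributional identity $\Delta\nu_2^{m|2n}=\delta(x)$ encodes'' conflates two distinct facts: the identity $\Delta\nu_2^{m|2n}=\delta$ (Theorem~\ref{fundsollaplace}) and the separate claim that $\Delta$ may be interchanged with the convolution, i.e.\ $\Delta(\nu_2^{m|2n}*\rho)=(\Delta\nu_2^{m|2n})*\rho$. The latter is not encoded in the former; it is the classical theorem on convolution of a distribution with a test function, which you then invoke (``in the usual way'') rather than prove. If you want the argument self-contained, the clean route is to use translation invariance of both the Lebesgue and the Berezin integrals to rewrite $f(x)=\int_B \nu_2^{m|2n}(z)\,\rho(x-z)$, so that all derivatives fall on the smooth, compactly supported factor $\rho(x-z)$; differentiation under the integral sign is then licit by dominated convergence, giving $\Delta_x f(x)=\int_B \nu_2^{m|2n}(z)\,(\Delta\rho)(x-z)$. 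Since $\Delta$ is an even operator, $\Delta_x\rho(x-z)=\Delta_z\rho(x-z)$, so this last expression is precisely the distributional pairing $\langle \nu_2^{m|2n},\Delta\left[\rho(x-\cdot)\right]\rangle=\langle \Delta\nu_2^{m|2n},\rho(x-\cdot)\rangle=\langle\delta,\rho(x-\cdot)\rangle=\rho(x)$, the middle equality being Theorem~\ref{fundsollaplace}. With that substitution your proof closes completely; as written, it is the right argument with one step quoted rather than shown.
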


\section{Fundamental solution of $\Delta^k$ and $\Delta^k \px$}
\label{fundsoliterlapl}

A similar technique as in section \ref{fundsollapl} can be used for the polyharmonic case. First we expand $\Delta^k$ as

\[
\Delta^k = \sum_{j=0}^{k} \binom{k}{j} \Delta_b^{k-j} \Delta_f^j.
\]
This expansion is valid as $\Delta_b$ commutes with $\Delta_f$.

\noindent
Now we propose the following form for its fundamental solution:

\[
\rho =  \sum_{l=0}^n a_l \left( \Delta_b^{n-l} \phi \right) \uxb^{2n-2l}
\]
with $\phi$ and $a_l \in \mR$ still to be determined. We calculate that

\[
\Delta^k \rho = \sum_{l=0}^n a_l  \sum_{j=0}^{k} \binom{k}{j}  \left( \Delta_b^{n-l+k-j} \phi \right) \Delta_f^j \uxb^{2n-2l}.
\]
As we have that, using formulae (\ref{evenexpr}) and (\ref{oddexpr}) in the case where $m=0$, $M=-2n$, 

\[
\Delta_f^j \uxb^{2n-2l} = 4^j (-1)^j \frac{(n-l)!}{(n-l-j)!}\frac{(l+j)!}{l!}\uxb^{2n-2l-2j}, \quad j \leq n-l
\]
this yields

\[
\Delta^k \rho =\sum_{l=0}^n a_l  \sum_{j=0}^{k} \binom{k}{j}   4^j (-1)^j \frac{(n-l)!}{(n-l-j)!}\frac{(l+j)!}{l!} \left( \Delta_b^{n-l+k-j} \phi \right)\uxb^{2n-2l-2j}.
\]
Putting $\Delta^k \rho = \delta(x)$ leads to the following set of equations

\begin{eqnarray}
a_0 \Delta_b^{n+k} \phi &=& \delta(\ux) \frac{1}{n!}\\
\label{iteratedsequence}
\sum_{j=0}^{k} a_{l-j} \binom{k}{j}   4^j (-1)^j \frac{(n-l+j)!}{(n-l)!}\frac{l!}{(l-j)!}&=&0\\
a_{-1}=a_{-2}=a_{-3}=\ldots &=&0.
\end{eqnarray}
We immediately have that $a_0 =1/n!$ and that $\phi = \nu_{2n+2k}^{m|0}$.

\noindent
Equation (\ref{iteratedsequence}) can be simplified by the substitution

\[
a_l = 4^l \frac{l ! }{(n-l)!}  b_l
\]
to

\[
\sum_{j=0}^{k} b_{l-j} \binom{k}{j} (-1)^j =0,\qquad b_0=1
\]
which has the solution (see the subsequent lemma \ref{auxlemma})

\[
b_l = \binom{l+k-1}{l}.
\]
We conclude that 

\[
a_l = 4^l \frac{(l+k-1)! }{(n-l)! (k-1)!} , \quad l=0,\ldots,n.
\]

\noindent
We can summarize the previous results in the following theorem.
\begin{theorem}
The function $\nu_{2k}^{m|2n}$ defined by

\[
\nu_{2k}^{m|2n} = \sum_{l=0}^n 4^l \frac{(l+k-1) ! }{(n-l)! (k-1)!} \nu_{2l+2k}^{m|0} \uxb^{2n-2l},
\]
is a fundamental solution for the operator $\Delta^k$.
\label{fundsollaplaceiterated}
\end{theorem}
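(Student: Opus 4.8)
The plan is to verify directly that the proposed $\nu_{2k}^{m|2n}$ satisfies $\Delta^k \nu_{2k}^{m|2n} = \delta(x)$ distributionally, together with the local integrability needed for this identity to make sense. Because the coefficients $a_l = 4^l (l+k-1)!/((n-l)!(k-1)!)$ were already pinned down in the computation preceding the statement, the theorem is largely a repackaging of that computation; the proof should therefore be short once the supporting combinatorial identity is in hand, exactly as in the proof of theorem \ref{fundsollaplace}.

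First I would record local integrability. Each $\nu_{2l+2k}^{m|0}$ is a fundamental solution of the classical iterated Laplacian $\Delta_b^{l+k}$ and hence lies in $L_{1}^{\mbox{\footnotesize loc}}(\mR^m)$: its singularity at the origin is of order $r^{2l+2k-m}$, with $2l+2k-m > -m$ (possibly up to a logarithm when $m$ is even). The accompanying factors $\uxb^{2n-2l}$ are polynomials in the anticommuting variables with bounded Clifford coefficients and do not affect integrability, so $\nu_{2k}^{m|2n} \in L_{1}^{\mbox{\footnotesize loc}}(\mR^m)_{m|2n}$ and defines a distribution.

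Next I would apply $\Delta^k$ through the binomial expansion $\Delta^k = \sum_{j=0}^k \binom{k}{j} \Delta_b^{k-j} \Delta_f^j$, which is legitimate since $\Delta_b$ and $\Delta_f$ commute. Inserting the explicit action of $\Delta_f^j$ on $\uxb^{2n-2l}$ together with the defining relations $\Delta_b^j \nu_{2l}^{m|0} = \nu_{2l-2j}^{m|0}$ for $j<l$ and $\Delta_b^l \nu_{2l}^{m|0} = \delta(\ux)$, I would collect the outcome according to the surviving power $\uxb^{2n-2l}$. Exactly one contribution has every bosonic Laplacian reaching its bottom power and hence produces $\delta(\ux)$; it reassembles to $\delta(\ux)\, \uxb^{2n}/n! = \delta(x)$. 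All other contributions must cancel, and this cancellation is precisely the recurrence (\ref{iteratedsequence}).

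The hard part will be confirming that the chosen coefficients annihilate every spurious term, i.e.\ that the recurrence is solved by the stated $a_l$. After the substitution $a_l = 4^l (l!/(n-l)!)\, b_l$ the recurrence collapses to $\sum_{j=0}^k (-1)^j \binom{k}{j} b_{l-j} = \delta_{l,0}$ with $b_0 = 1$. I would settle this by generating functions: with $B(t) = \sum_l b_l t^l$ the left-hand side is the coefficient of $t^l$ in $(1-t)^k B(t)$, so the recurrence reads $(1-t)^k B(t) = 1$, giving $B(t) = (1-t)^{-k} = \sum_l \binom{l+k-1}{l} t^l$ and hence $b_l = \binom{l+k-1}{l}$. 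This is the combinatorial content deferred to lemma \ref{auxlemma}, and it is the only genuinely nontrivial ingredient; everything else is the bookkeeping sketched above.
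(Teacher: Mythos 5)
Your proposal is correct, and its overall skeleton coincides with the paper's: the paper likewise derives the theorem by expanding $\Delta^k = \sum_{j=0}^{k}\binom{k}{j}\Delta_b^{k-j}\Delta_f^j$, applying the explicit formula for $\Delta_f^j \uxb^{2n-2l}$, isolating the single term $l=j=0$ that produces $\delta(\ux)\uxb^{2n}/n! = \delta(x)$, and reducing the cancellation of all remaining terms to the recurrence (\ref{iteratedsequence}), which after the substitution $a_l = 4^l \frac{l!}{(n-l)!}\, b_l$ becomes $\sum_{j}(-1)^j\binom{k}{j}b_{l-j}=0$ with $b_0=1$. Where you genuinely depart from the paper is in the proof of this combinatorial identity. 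The paper's lemma \ref{auxlemma} introduces the polynomials $R_l(x) = \sum_{j=0}^{\min(k,l)}(-1)^j\binom{k}{j}\frac{(x+k-j-1)!}{(x-j)!}$ and shows $R_l(l)=0$ by an induction that splits into three cases ($l\leq k-2$, $l=k-1$, $l\geq k$). Your generating-function argument --- reading the recurrence as the formal power series identity $(1-t)^k B(t) = 1$, so that $B(t)=(1-t)^{-k}$ and $b_l = \binom{l+k-1}{l}$ by the negative binomial series --- reaches the same conclusion in a few lines, avoids the case analysis entirely, and makes transparent why the answer is a negative-binomial coefficient; the paper's route, by contrast, stays within elementary polynomial manipulation and yields the finer information that each partial sum $R_l(x)$ factors explicitly. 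Your opening remark on the local integrability of the $\nu_{2l+2k}^{m|0}$ (needed for the distributional statement to make sense) is a point the paper leaves implicit for this theorem, so including it is a small improvement rather than a deviation.
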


\noindent
In a similar vein we obtain the fundamental solution $\nu_{2k+1}^{m|2n}$ for the operator $\Delta^k \px = \px^{2k+1}$ by calculating $\px \nu_{2k+2}^{m|2n}$. This leads to

\begin{theorem}
The function $\nu_{2k+1}^{m|2n}$ defined by

\begin{eqnarray*}
\nu_{2k+1}^{m|2n} &=& \sum_{l=0}^{n-1} 2  \frac{4^l(l+k) ! }{(n-l-1)! k!} \nu_{2l+2k+2}^{m|0} \uxb^{2n-2l-1}\\
&& - \sum_{l=0}^n  \frac{ 4^l(l+k)!}{(n-l)! k!} \nu_{2l+2k+1}^{m|0} \uxb^{2n-2l},
\end{eqnarray*}
is a fundamental solution for the the operator $\Delta^k \px$.
\end{theorem}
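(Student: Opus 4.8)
The plan is to verify directly that $\px \nu_{2k+2}^{m|2n}$ equals the claimed expression and is a fundamental solution of $\Delta^k \px$. The starting point is Theorem \ref{fundsollaplaceiterated} applied with $k$ replaced by $k+1$, which states that $\nu_{2k+2}^{m|2n}$ is a fundamental solution of $\Delta^{k+1}$, i.e.\ $\Delta^{k+1}\nu_{2k+2}^{m|2n}=\delta(x)$ in distributional sense. Since $\px^2=\Delta$, we have $\Delta^k \px(\px\nu_{2k+2}^{m|2n}) = \px^{2k+2}\nu_{2k+2}^{m|2n} = \Delta^{k+1}\nu_{2k+2}^{m|2n}=\delta(x)$, so $\px\nu_{2k+2}^{m|2n}$ is a fundamental solution of $\Delta^k\px$ as soon as the derivative is well-defined, which it is since $\nu_{2k+2}^{m|2n}$ is already a distribution. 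It remains only to compute this derivative explicitly.

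To do so I would split $\px=\upxb-\upx$ and act term by term on each summand $\nu_{2l+2k+2}^{m|0}\,\uxb^{2n-2l}$ of $\nu_{2k+2}^{m|2n}$. The bosonic part $\upx=\sum_j e_j\pj$ only differentiates the radial factor $\nu_{2l+2k+2}^{m|0}$, which depends solely on $\ux$; here I must check that the generators $e_j$ pass through $\uxb^{2n-2l}$ without obstruction. Since $e_j{e \grave{}}_i = -{e \grave{}}_i e_j$, each $e_j$ anticommutes with $\uxb$, but the exponent $2n-2l$ is even, so the accumulated sign is $(-1)^{2n-2l}=+1$ and $e_j$ commutes with $\uxb^{2n-2l}$. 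Using the relation $\upx \nu_{2l+2k+2}^{m|0}=\nu_{2l+2k+1}^{m|0}$ from Section \ref{fundsolRm}, the $-\upx$ contribution is $-\nu_{2l+2k+1}^{m|0}\,\uxb^{2n-2l}$. The fermionic part $\upxb$ only differentiates $\uxb^{2n-2l}$, since the scalar bosonic factor $\nu_{2l+2k+2}^{m|0}$ is even and commutes with the symplectic generators; treating this as a purely fermionic computation ($m=0$, $M=-2n$) and invoking formula (\ref{evenexpr}) with $s=n-l$ gives $\upxb(\uxb^{2n-2l})=2(n-l)\uxb^{2n-2l-1}$, contributing $2(n-l)\nu_{2l+2k+2}^{m|0}\,\uxb^{2n-2l-1}$.

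Finally I would multiply by the coefficient $4^l(l+k)!/((n-l)!\,k!)$, sum over $l$, and reindex. The $\upx$ terms reproduce the second sum of the statement verbatim. In the $\upxb$ terms the factor $2(n-l)$ cancels against $(n-l)!$, turning it into $2/(n-l-1)!$ and annihilating the $l=n$ term, which yields exactly the first sum with upper limit $n-1$. The computation is entirely routine; the only points requiring attention are the sign check for commuting $e_j$ past the even power $\uxb^{2n-2l}$ and the bookkeeping of the vanishing top term in the fermionic sum.
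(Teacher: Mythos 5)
Your proposal is correct and takes essentially the same route as the paper: the paper likewise obtains $\nu_{2k+1}^{m|2n}$ by applying $\px$ to $\nu_{2k+2}^{m|2n}$, the fundamental solution of $\Delta^{k+1}$ from Theorem \ref{fundsollaplaceiterated}, so that $\Delta^k\px(\px\nu_{2k+2}^{m|2n})=\Delta^{k+1}\nu_{2k+2}^{m|2n}=\delta(x)$. Your explicit term-by-term computation (the sign check for $e_j$ past even powers of $\uxb$, the use of formula (\ref{evenexpr}) with $m=0$, and the cancellation killing the $l=n$ term) correctly fills in the calculation the paper leaves implicit.
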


\vspace{2mm}
\noindent
We still have to prove the technical lemma we used in the derivation of theorem \ref{fundsollaplaceiterated}.
\begin{lemma}
The sequence $(b_l)$, $l=0,1,\ldots$, recursively defined by

\[
\sum_{j=0}^{\min(k,l)} b_{l-j} \binom{k}{j}   (-1)^j =0, \quad b_0 = 1
\]
is given explicitly by

\[
b_l = \binom{l+k-1}{l}.
\]
\label{auxlemma}
\end{lemma}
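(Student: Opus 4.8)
The plan is to use generating functions. I would introduce the formal power series $B(t) = \sum_{l=0}^{\infty} b_l t^l$ and note that, since $(1-t)^k = \sum_{j=0}^{k} \binom{k}{j}(-1)^j t^j$, the coefficient of $t^l$ in the product $B(t)(1-t)^k$ is precisely
\[
\sum_{j=0}^{\min(k,l)} b_{l-j} \binom{k}{j} (-1)^j.
\]
By the defining recurrence this coefficient vanishes for every $l \geq 1$, while for $l=0$ it equals $b_0 = 1$. Hence $B(t)(1-t)^k = 1$, that is, $B(t) = (1-t)^{-k}$. Because $(1-t)^k$ is a polynomial with constant term $1$, it is invertible in the ring of formal power series, so this identification is rigorous at the formal level and no convergence questions arise.

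It then remains to expand $(1-t)^{-k}$ by the generalized binomial theorem, $(1-t)^{-k} = \sum_{l=0}^{\infty} \binom{-k}{l}(-1)^l t^l$, and to observe, via a short computation of the falling factorial, that $\binom{-k}{l}(-1)^l = \binom{l+k-1}{l}$. Comparing coefficients of $t^l$ in $B(t) = (1-t)^{-k}$ yields $b_l = \binom{l+k-1}{l}$ at once, which is the claim.

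As an alternative I would argue by induction on $l$. The base case $l=0$ is immediate, and the induction step, after substituting the claimed values $b_{l-j} = \binom{l-j+k-1}{l-j}$, reduces to the Vandermonde-type identity $\sum_{j=0}^{\min(k,l)} \binom{l-j+k-1}{l-j}\binom{k}{j}(-1)^j = 0$ for $l \geq 1$. This identity is itself nothing but the coefficient-of-$t^l$ form of the relation $(1-t)^{-k}(1-t)^k = 1$, so both routes rest on the same elementary fact. I therefore expect no genuine obstacle here; the generating-function computation is the cleanest path, with the only minor care needed being the verification of the falling-factorial identity $\binom{-k}{l}(-1)^l = \binom{l+k-1}{l}$.
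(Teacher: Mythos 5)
Your proof is correct, and it takes a genuinely different route from the paper's. The paper proceeds by direct verification: it substitutes the claimed values $b_{l-j}=\binom{l-j+k-1}{l-j}$ into the recurrence, packages the resulting sum as a polynomial $R_l(x)=\sum_{j}(-1)^j\binom{k}{j}\frac{(x+k-j-1)!}{(x-j)!}$ evaluated at $x=l$, and then shows $R_l(l)=0$ through an induction on a factored form of $R_t(x)$ split into three cases ($l\leq k-2$, $l=k-1$, $l\geq k$). Your generating-function argument replaces all of this by the single observation that the recurrence is precisely the statement $B(t)(1-t)^k=1$ in the ring of formal power series, so that $B(t)=(1-t)^{-k}$ and the coefficients are read off from the negative binomial series via $\binom{-k}{l}(-1)^l=\binom{l+k-1}{l}$ (which is correct: $\binom{-k}{l}=(-1)^l\binom{k+l-1}{l}$). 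Your route has two advantages: it handles existence and uniqueness in one stroke (the recurrence determines $(b_l)$ uniquely, and invertibility of $(1-t)^k$ among formal power series encodes exactly that), and it eliminates the case analysis entirely. What the paper's approach buys is self-containedness at the level of finite polynomial identities, with no appeal to formal power series machinery — but your argument is shorter, cleaner, and equally rigorous, since everything takes place formally and no convergence issues arise. Your fallback induction route is also sound, though, as you note, it rests on the same identity $(1-t)^{-k}(1-t)^k=1$ and so offers no real independence from the first argument.
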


\begin{proof}
Define the polynomial $R_l(x)$ by

\begin{eqnarray*}
R_l(x) &=& \sum_{j=0}^{\min(k,l)} (-1)^j \binom{k}{j} \frac{(x+k-j-1)!}{(x-j)!}\\
&=&  \sum_{j=0}^{\min(k,l)} (-1)^j \binom{k}{j} (x+k-1-j) \ldots (x+1-j).
\end{eqnarray*}
We then have to prove that $R_l(l) = 0$. 

\noindent
We distinguish between three cases.

\vspace{2mm}
\noindent
\textbf{1) $l\leq k-2$}

We claim that for all $t \leq k-2$

\[
R_t(x) = (-1)^t \binom{k-1}{t} (x+k-t-1)\ldots (x+1) (x-1) \ldots (x-t).
\]
This can be proven using induction. The case where $t=1$ is easily checked. So we suppose the formula holds for $t-1$, then we calculate

\begin{eqnarray*}
&&R_{t-1}(x) + (-1)^t \binom{k}{t} (x+k-1-t) \ldots (x+1-t)\\
&&= (-1)^t (x+k-t-1)\ldots (x+1) (x-1) \ldots (x-t+1)\\
&& \times \left( \binom{k}{t} x - \binom{k-1}{t-1} (x+k-t) \right)\\
&&= (-1)^t \binom{k-1}{t} (x+k-t-1)\ldots (x+1) (x-1) \ldots (x-t)\\
&&= R_t(x)
\end{eqnarray*}
which proves the hypothesis. Now clearly $R_l(l) = 0$.

\vspace{2mm}
\noindent
\textbf{2) $l=k-1$}

Using the previous results, it is shown that in this case $R_{k-1}(x)$ equals

\[
R_{k-1}(x) = - (-1)^k (x-1) \ldots (x+1-k)
\]
so $R_{k-1}(k-1) = 0$.

\vspace{2mm}
\noindent
\textbf{3) $l \geq k$}

Now we have that

\[
R_l(x) = R_k(x) = 0
\]
so this case is also proven.
\end{proof}

\section{A larger class of differential operators}

The technique used above can be extended to a larger class of differential operators. Suppose we consider an operator of the following form

\[
P = L(x,\px) + \Delta_f
\]
with $L(x,\px)$ an elliptic operator in $\mR^m$ and $\Delta_f$ the fermionic Laplace operator. Note that $L(x,\px)$ and $\Delta_f$ clearly commute. An interesting operator in this class is the super Helmholtz operator

\[
\Delta - \lambda^2, \qquad \lambda \in \mR
\]
with $L(x,\px) = \Delta_b - \lambda^2$.

\noindent
Denoting by $\mu_{2k}^{m|0}$ ($k=1,2,\ldots$) a set of fundamental solutions for the operators $L(x,\px)^k$ such that

\begin{eqnarray*}
L(x,\px)^j \mu_{2k}^{m|0} &=& \mu_{2k-2j}^{m|0}, \quad j<k\\
L(x,\px)^k \mu_{2k}^{m|0} &=& \delta(\ux)
\end{eqnarray*}
we can now use the same technique as in section \ref{fundsoliterlapl} to obtain a fundamental solution $\mu_{2k}^{m|2n}$ for the operator $P^k$. This leads to

\[
\mu_{2k}^{m|2n} = \sum_{l=0}^n 4^l \frac{(l+k-1)!}{(n-l)! (k-1)!} \mu_{2l+2k}^{m|0} \uxb^{2n-2l}.
\]

\section{The purely fermionic case}
\label{fermcase}

In this case there is no fundamental solution. Indeed, determining the fundamental solution of $\Delta_f$ requires  solving the algebraic equation

\[
\Delta_f \nu_{2}^{0|2n} = {x \grave{}}_{1} \ldots {x \grave{}}_{2n}
\]
which clearly has no solution, since there are no polynomials of degree higher than $2n$.

\section{Conclusions}

In this paper we have developed a technique to construct fundamental solutions for certain differential operators in superspace. In particular we have constructed the fundamental solutions of the natural powers of the super Dirac operator $\px^k$.

We envisage to use these fundamental solutions in a further development of the function theory of Clifford analysis in superspace. There is no doubt that they will play an important role in the generalization of e.g. the Cauchy and Hilbert transform to superspace.

\vspace{3mm}

\subsection*{Acknowledgement}
The first author is a research assistant supported by the Fund for Scientific Research Flanders (F.W.O.-Vlaanderen). He would like to thank Liesbet Van de Voorde for a discussion concerning section \ref{fundsoliterlapl}.

\end{document}